\newtheorem{lemma}{Lemma}[section]
\newtheorem{theorem}[lemma]{Theorem}
\newcommand{\Q}{\mathbb{Q}}
\newcommand{\R}{\mathbb{R}}
\newcommand{\Ical}{\mathcal{I}}
\newcommand{\Kcal}{\mathcal{K}}
\newcommand{\Tcal}{\mathcal{T}}
\newcommand{\sub}[2]{\mathrm{Sub}(#1, #2)}
\newcommand{\stb}{\Ical}
\newcommand{\clq}{\Kcal}
\newcommand{\cop}{\mathrm{COP}}           % Copositive cone.
\newcommand{\kpcone}{\mathcal{C}}         % de Klerk-Pasechnik cone.
\newcommand{\symg}{\mathfrak{S}}    % Symmetric group.
\newcommand{\tp}{\mathsf{T}}        % Transpose of matrix.
\newcommand{\csym}{C_{\rm sym}}     % Symmetric continuous kernels.
\newcommand{\msym}{M_{\rm sym}}     % Symmetric signed Radon measures.
\newcommand{\one}{\mathbf{1}}       % Constant one function.
\newcommand{\xip}{\xi}              % Olga-Juan hierarchy.
\newcommand{\flatten}[1]{\llbracket #1\rrbracket}
\newcommand{\invflatten}[2]{\llbracket #1\rrbracket_{#2}^{-1}}
\newenvironment{optprob}
{% begin code
  \arraycolsep=0pt
  \begin{array}{r@{\ }l@{\quad}l}
}%
{% end code
  \end{array}
}
\newlength\claimlen
\newcommand{\defi}[1]{\textit{#1}}
\title{On the convergence of the $k$-point bound for topological packing graphs}
\author{Bram Bekker}
\address{A.J.F. Bekker, Delft Institute of Applied Mathematics,
  Delft University of Technology, Mekelweg~4, 2628~CD Delft, The
  Netherlands.}
\email{B.Bekker@tudelft.nl}
\author{Fernando Mário de Oliveira Filho}
\address{F.M. de Oliveira Filho, Delft Institute of Applied Mathematics,
  Delft University of Technology, Mekelweg~4, 2628~CD Delft, The
  Netherlands.}
\email{F.M.deOliveiraFilho@tudelft.nl}
\thanks{The first author is supported by the grant OCENW.KLEIN.024 of the Dutch
  Research Council (NWO)}
\subjclass[2010]{52C17, 90C22}
\date{June 5, 2023}
\begin{document}

\begin{abstract}
  We show that the $k$-point bound of de Laat, Machado, Oliveira, and
  Vallentin~\cite{LaatMOV2022}, a hierarchy of upper bounds for the independence
  number of a topological packing graph derived from the Lasserre hierarchy,
  converges to the independence number.
\end{abstract}

\maketitle
\markboth{A.J.F. Bekker and F.M. de Oliveira Filho}{On the convergence of the
  $k$-point bound for topological packing graphs}

%%%%%%%%%%%%%%%%%%%%%%%%%%%%%%%%%%%%%%%%%%%%%%%%%%%%%%%%%%%%%%%%%%%%%%

\section{Introduction}%
\label{sec:intro}

Gvozdenović, Laurent, and Vallentin~\cite{GvozdenovicLV2009} introduced a
hierarchy of upper bounds for the independence number of a finite graph based on
a restriction of the Lasserre hierarchy.  Their hierarchy is weaker than the
Lasserre hierarchy but also easier to compute, and moreover it converges to the
independence number.

De Laat, Machado, Oliveira, and Vallentin~\cite{LaatMOV2022} later extended a
version of this hierarchy to topological packing graphs, calling it the
\defi{$k$-point bound}.  A graph is a \defi{topological graph} if its vertex set
is a Hausdorff space; we say that a topological graph is \defi{compact} if its
vertex set is compact.  A topological graph is a \defi{packing graph} if any
finite clique is contained in an open clique.

De Laat and Vallentin~\cite{LaatV2015} extended the Lasserre hierarchy to
topological packing graphs and showed convergence.  In this note, the same is
shown for the $k$-point bound (see Theorem~\ref{thm:k-point-conv} below) by
comparing it to a copositive hierarchy due to Kuryatnikova and
Vera~\cite{Kuryatnikova2019}.

%%%%%%%%%%%%%%%%%%%%%%%%%%%%%%%%%%%%%%%%%%%%%%%%%%%%%%%%%%%%%%%%%%%%%%

\section{Preliminaries}

\subsection{Functional analysis}

Let~$X$ be a topological space.  We denote by~$C(X)$ the space of real-valued
continuous functions on~$X$ and by~$M(X)$ the space of signed Radon measures
on~$X$ of bounded total variation.  There is a duality between~$C(X)$ and~$M(X)$
which we always denote by
\[
  \langle f, \mu\rangle = \int_X f(x)\, d\mu(x)
\]
for~$f \in C(X)$ and~$\mu \in M(X)$.  Duals of cones and adjoints of operators
under this duality are denoted by an asterisk.

We denote by~$C(X)_{\geq 0}$ the cone of nonnegative functions in~$C(X)$.  Its
dual is the cone of nonnegative Radon measures, denoted by~$M(X)_{\geq 0}$.

By~$\csym(X)$ we denote the space of symmetric kernels~$F\colon X^2 \to \R$.
Similarly, we denote by~$\msym(X)$ the space of symmetric signed Radon measures
on~$X^2$, that is, the set of~$\mu \in M(X^2)$ such that~$\mu(U) = \mu(U^\tp)$
for~$U \subseteq X$, where~$U^\tp = \{\, (x, y) : (y, x) \in U\,\}$.

\subsection{Spaces of subsets}

Let~$V$ be a set.  For an integer~$k \geq 0$, denote by~$\sub{V}{k}$ the
collection of all subsets of~$V$ with cardinality at most~$k$.  For~$k \geq 1$
and~$v = (v_1, \ldots, v_k) \in V^k$, denote by~$\flatten{v}$ the
set~$\{v_1, \ldots, v_k\}$.  For every~$k \geq 1$ we have that~$\flatten{\cdot}$
maps~$V^k$ to~$\sub{V}{k}$.  Note that~$k$ is superfluous in the definition
of~$\flatten{\cdot}$, but not in the definition of the preimage.  Hence, given a
set~$S \subseteq \sub{V}{k}$, we write
\[
  \invflatten{S}{k} = \{\, v \in V^k : \flatten{v} \in S\,\}.
\]

If~$V$ is a topological space, then we can introduce
on~$\sub{V}{k} \setminus \{\emptyset\}$ the quotient topology
of~$\flatten{\cdot}$ by declaring a set~$S$ open if~$\invflatten{S}{k}$ is open
in~$V^k$.  We define a topology on~$\sub{V}{k}$ by taking the disjoint union
with~$\{\emptyset\}$.  For background on the topology on the space of subsets,
see Handel~\cite{Handel2000}.

Let~$G = (V, E)$ be a topological graph and~$k \geq 0$ be an integer.
By~$\stb_k$ we denote the collection of all independent sets of~$G$ of
cardinality at most~$k$, which as a subset of~$\sub{V}{k}$ can be equipped with
the relative topology.  We denote by~$\stb_{=k}$ the collection of all
independent sets of cardinality~$k$.  We have the following
lemma~\cite[Lemma~2]{LaatV2015}.

\begin{lemma}%
  \label{lem:ind-topo}
  If~$G = (V, E)$ is a compact topological packing graph, then~$\stb_{=r}$ is
  both closed and open in~$\stb_k$ for all~$r \leq k$.  If~$Y$ is a topological
  space, then~$f\colon \stb_k \to Y$ is continuous if and only if the
  restriction of~$f$ to~$\stb_{=r}$ is continuous for all~$r \leq k$.
\end{lemma}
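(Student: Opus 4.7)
The plan is to first establish that each $\stb_{=r}$ is open in $\stb_k$; once this is known, the closedness claim reduces to the disjoint decomposition $\stb_k=\bigsqcup_{r=0}^{k}\stb_{=r}$ and the continuity characterization follows by breaking open preimages across the pieces. The case $r=0$ is immediate because $\{\emptyset\}$ is an isolated point of $\sub{V}{k}$ by the very definition of its topology. For $r\ge 1$, the main task is to produce, for each $A=\{w_1,\ldots,w_r\}\in\stb_{=r}$, an open set $S_A$ in $\sub{V}{k}$ with $A\in S_A$ and $S_A\cap\stb_k\subseteq\stb_{=r}$; the union $\bigcup_A S_A$ is then an open set of $\sub{V}{k}$ whose intersection with $\stb_k$ is exactly $\stb_{=r}$.

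The main obstacle, and the reason the packing hypothesis is needed, is that in a general compact Hausdorff space the set of $k$-tuples taking exactly $r$ distinct values is typically \emph{not} open: arbitrarily close to a tuple like $(w,w,w')$ sit tuples with three distinct entries. The packing property eliminates this pathology once we intersect with $\stb_k$, because breaking a repeated entry into two nearby distinct entries would produce two distinct vertices lying simultaneously in a small common clique and in a common independent set.

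Concretely, I would use Hausdorffness to pick pairwise disjoint open neighborhoods $U_j\ni w_j$ for $j=1,\ldots,r$, and then apply the packing property to each singleton clique $\{w_j\}$ to shrink $U_j$ so as to sit inside an open clique $K_j\ni w_j$. Setting
\[
  S_A=\{\,B\in\sub{V}{k}: B\cap U_j\ne\emptyset\text{ for each } j\text{ and } B\subseteq U_1\cup\cdots\cup U_r\,\},
\]
openness of $S_A$ in $\sub{V}{k}$ follows from openness of its preimage $\invflatten{S_A}{k}$ in $V^k$, which in turn is a finite intersection of finite unions of the open sets $\{v\in V^k:v_i\in U_j\}$. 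One has $A\in S_A$ trivially. If $B\in S_A\cap\stb_k$, then $B$ is an independent set with $B\subseteq K_1\cup\cdots\cup K_r$, so $|B\cap K_j|\le 1$ for each $j$ (two distinct elements of the clique $K_j$ would be adjacent), and $|B\cap K_j|\ge 1$ because $B\cap U_j\ne\emptyset$; hence $|B|=r$ and $B\in\stb_{=r}$.

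With openness in hand, closedness of each $\stb_{=r}$ follows from the disjoint decomposition, each piece being the complement in $\stb_k$ of the open union of the others. For the continuity statement, the forward direction is immediate. For the converse, given an open $W\subseteq Y$, I would write $f^{-1}(W)=\bigcup_{r=0}^{k}(f|_{\stb_{=r}})^{-1}(W)$; each piece is open in $\stb_{=r}$ by hypothesis and hence open in $\stb_k$ since $\stb_{=r}$ is open in $\stb_k$, so $f^{-1}(W)$ is open.
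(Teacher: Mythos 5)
Your proof is correct. Note that the paper does not prove this lemma itself but cites it from de Laat and Vallentin \cite[Lemma~2]{LaatV2015}; your argument is the standard one and uses exactly the same device --- the basic open sets $(U_1,\ldots,U_r)_k$ built from disjoint neighborhoods shrunk into open cliques --- that the paper itself deploys in its theorem characterizing packing graphs (the proof that (i) implies (ii)), so it fills the gap in the expected way.
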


The edge set of a topological packing graph is open.  We will prove a stronger
result, not found in the literature, that includes the converse of this
assertion.

Handel~\cite[Proposition~2.11]{Handel2000} showed that if~$U_1$, \dots,~$U_r$
are disjoint open sets in~$V$ and if~$k \geq r$, then the set
\[
  (U_1, \ldots, U_r)_k = \{\, A \in \sub{V}{k} : \text{$A \cap U_i \neq
    \emptyset$ for all~$i$ and $A \subseteq U_1 \cup \cdots \cup U_r$}\,\}
\]
is open.  The collection of all such sets for~$r \leq k$ forms a basis for the
topology on~$\sub{V}{k}$.

Let~$\clq_k$ denote the set of cliques of~$G$ of cardinality at most~$k$, and
likewise let~$\clq_{=k}$ denote the set of cliques of cardinality~$k$, both
equipped with the relative topology induced by~$\sub{V}{k}$.  We can
identify~$E$ with~$\clq_{=2}$.  We will commonly switch between considering~$E$
as a subset of~$\sub{V}{2}$ or as a symmetric subset of~$V^2$, since it does not
make a difference for the topology; it will be clear from context in which
situation we are.

\begin{theorem}
  For a topological graph~$G = (V, E)$, the following are equivalent:
  \begin{enumerate}
  \item[(i)] $G$ is a packing graph;
    
  \item[(ii)] $\clq_k$ is open in~$\sub{V}{k}$ for every~$k \geq 0$;
    
  \item[(iii)] $\clq_2$ is open in~$\sub{V}{2}$.
  \end{enumerate}
\end{theorem}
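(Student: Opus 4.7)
My plan is to argue as follows. The implication $(\mathrm{ii}) \Rightarrow (\mathrm{iii})$ is the special case $k = 2$ and needs no argument. For the remaining two implications I would work exclusively inside the basis of $\sub{V}{k}$ described above, using the easy refinement that, for any finite $C = \{v_1, \ldots, v_r\}$, those sets $(U_1, \ldots, U_r)_k$ with $v_i \in U_i$ and pairwise disjoint $U_i$ form a neighbourhood basis at $C$.

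For $(\mathrm{i}) \Rightarrow (\mathrm{ii})$, fix $C = \{v_1, \ldots, v_r\} \in \clq_k$ (the case $C = \emptyset$ is trivial, as $\{\emptyset\}$ is isolated in $\sub{V}{k}$). By the packing property there is an open clique $U \supseteq C$, and since $V$ is Hausdorff I would pick pairwise disjoint open sets $V_1, \ldots, V_r$ separating the $v_i$. Setting $U_i = U \cap V_i$, the Handel neighbourhood $(U_1, \ldots, U_r)_k$ of $C$ consists of subsets of $U$, hence of cliques; so it is contained in $\clq_k$.

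The substantive direction is $(\mathrm{iii}) \Rightarrow (\mathrm{i})$. Given a finite clique $C = \{v_1, \ldots, v_r\}$, I would exploit the openness of $\clq_2$ at two types of points. First, each singleton $\{v_i\}$ lies in $\clq_2$, and its basis neighbourhoods in $\sub{V}{2}$ have the form $(W_i)_2$ for open $W_i \ni v_i$; since $(W_i)_2$ contains every two-element subset of $W_i$, the inclusion $(W_i)_2 \subseteq \clq_2$ forces $W_i$ itself to be an open clique. Second, for each pair $i \ne j$ the edge $\{v_i, v_j\}$ lies in $\clq_2$, and a basis neighbourhood there can be refined (using Hausdorffness) to one of the form $(U_{ij}, U_{ji})_2$ with disjoint opens $U_{ij} \ni v_i$ and $U_{ji} \ni v_j$, from which every cross pair is forced to be an edge. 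Setting
\[
  U_i = W_i \cap \bigcap_{j \ne i} U_{ij},
\]
the $U_i$ are pairwise disjoint open neighbourhoods of the $v_i$, since $U_i \subseteq U_{ij}$ and $U_j \subseteq U_{ji}$ are disjoint by construction. I would finish by checking that $U = U_1 \cup \cdots \cup U_r$ is an open clique containing $C$: distinct $x, y$ lying in a common $U_i \subseteq W_i$ are adjacent by the first item, and distinct $x \in U_i$, $y \in U_j$ with $i \ne j$ are adjacent by the second.

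The one point that is easy to miss, and which I flag as the main obstacle, is the need to use the singleton case of $(\mathrm{iii})$: without open cliques around each individual $v_i$, the edge-level data only handles cross-$U_i$ pairs and leaves pairs within a single $U_i$ unaddressed, so $U$ would fail to be a clique.
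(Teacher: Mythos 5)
Your proof is correct and follows essentially the same route as the paper's: the same use of Handel's basis sets $(U_1,\ldots,U_r)_k$ for (i)$\Rightarrow$(ii), and for (iii)$\Rightarrow$(i) the same two-stage argument (first extracting an open clique around each vertex from the singleton neighbourhoods, then combining these with the cross-pair neighbourhoods of the edges of $C$ via intersections and a final union). The point you flag as the main obstacle --- that the singleton case of (iii) is needed to handle pairs within a single $U_i$ --- is exactly the role played by the sets $C_x$ in the paper's proof.
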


\begin{proof}
To see that~(i) implies~(ii), let~$C = \{x_1, \ldots, x_k\}$ be a clique of
cardinality~$k$.  It is contained in an open clique~$K$.  Since~$V$ is a
Hausdorff space, there are disjoint open sets~$U_1$, \dots,~$U_k$ such
that~$x_i \in U_i$ for all~$i$.  By taking the intersection of the~$U_i$
with~$K$, we can assume that the union of the~$U_i$ is a clique.  The
set~$(U_1, \ldots, U_k)_k$ is an open set in~$\sub{V}{k}$ containing~$C$ and
consisting only of cliques.  This proves that~$\clq_k$ is open in~$\sub{V}{k}$.

That~(ii) implies~(iii) is immediate.  To see that~(iii) implies~(i), we first
prove that every vertex lies in an open clique.

Assume~$\clq_2$ is open in~$\sub{V}{2}$.  Since the sets of the form~$(U_1,
U_2)_2$ with~$U_1$, $U_2$ open and disjoint form a basis of the topology
of~$\sub{V}{2}$, we can assume that for~$x \in V$ we have an open
neighborhood~$(U)_2$ of~$\{x\}$ for some~$U$ open such that~$(U)_2 \subseteq
\clq_2$.  So~$U$ is an open clique containing~$x$.

Let~$C \subseteq V$ be a finite clique.  We claim that any
pair~$\{x, y\} \subseteq C$ is contained in an open clique.  Indeed, there are
disjoint open sets~$U_x \ni x$ and~$U_y \ni y$ such
that~$(U_x, U_y)_2 \subseteq \clq_2$.  In particular, any choice of a
pair~$(x', y') \in U_x \times U_y$ is an edge.  We know that both~$x$ and~$y$
are contained in open cliques~$C_x$ and~$C_y$, respectively, and
so~$U_x \cap C_x$ and~$U_y \cap C_y$ are disjoint open cliques.  We see
that~$(U_x \cap C_x) \cup (U_y \cap C_y)$ is an open clique
containing~$\{x, y\}$, as we wanted.  Write~$U_{x,y} = U_x \cap C_x$
and~$U_{y, x} = U_y \cap C_y$ and let~$U_{x,x}$ be any open clique
containing~$x$.

For every~$x \in C$, the set~$\bigcap_{y \in C} U_{x,y}$ is an open clique
containing~$x$.  Moreover, for all~$z \in C$ every pair of distinct vertices in
$U_{z, x} \times \bigcap_{y \in C} U_{x,y}$ is an edge.  This shows that
\[
  \bigcup_{x \in C} \bigcap_{y \in C} U_{x, y}
\]
is an open clique containing~$C$.
\end{proof}

%%%%%%%%%%%%%%%%%%%%%%%%%%%%%%%%%%%%%%%%%%%%%%%%%%%%%%%%%%%%%%%%%%%%%%

\section{The \texorpdfstring{$k$}{k}-point bound}

Let~$G = (V, E)$ be a topological packing graph.  For~$k \geq 2$,
let~$\csym(\stb_1^2 \times \stb_{k-2})$ be the space of continuous
functions~$F\colon \stb_1^2 \times \stb_{k-2} \to \R$ such
that~$F(S, T, Q) = F(T, S, Q)$ for all~$(S, T, Q)$.  Likewise,
let~$\msym(\stb_1^2 \times \stb_{k-2})$ be the space of signed Radon
measures~$\nu$ such that~$\nu(X) = \nu(X^\tp)$ for
all~$X \subseteq \stb_1^2 \times \stb_{k-2}$, where
$X^\tp = \{\, (S, T, Q) : (T, S, Q) \in X\,\}$.

Let~$B_k\colon \csym(\stb_1^2 \times \stb_{k-2}) \to C(\stb_k)$ be such that for
every function~$F$ and every~$I \in \stb_k$ we have
\begin{equation}%
  \label{eq:bk-def}
  (B_k F)(I) = \sum_{Q \in \sub{V}{k-2}} \sum_{\substack{S, T \in \sub{I}{1}\\Q
      \cup S \cup T = I}} F(S, T, Q).
\end{equation}
We prove in~\S\ref{sec:bk-cont} that~$B_k F$ is indeed continuous.  The
map~$B_k$ is linear and bounded, and hence continuous.  So we can take its
adjoint, which is the
operator~$B_k^*\colon M(\stb_k) \to \msym(\stb_1^2 \times \stb_{k-2})$.

Let~$C(\stb_1^2 \times \stb_{k-2})_{\succeq 0}$ be the cone of continuous
functions~$F \in \csym(\stb_1^2 \times \stb_{k-2})$ such that the kernel
$(S, T) \mapsto F(S, T, Q)$ is positive semidefinite for every~$Q$.  We denote
the dual of this cone by~$M(\stb_1^2 \times \stb_{k-2})_{\succeq 0}$.

For an integer~$k \geq 2$, the \defi{$k$-point bound} for~$G$ is
\[
  \begin{optprob}
    \Delta_k(G) = \sup&\nu(\stb_{=1})\\
                      &\nu(\{\emptyset\})=1,\\
                      &B_k^{*}\nu\in M(\stb_1^2\times \stb_{k-2})_{\succeq0},\\
                      &\nu\in M(\stb_{k})_{\geq0}.
  \end{optprob}
\]
We denote both this problem and its optimal value by~$\Delta_k(G)$.  There are
two ways in which this definition deviates from the one by de Laat, Machado,
Oliveira, and Vallentin~\cite{LaatMOV2022}.  First, the original formulation
excludes the empty set from~$\stb_k$.  Second, a different normalization is
used.  Including the empty set is necessary in the proof of convergence given
in~\S\ref{sec:convergence}; it seems to give stronger, nonequivalent problems.
Changing the normalization can be done without affecting convergence, and the
proof presented in~\S\ref{sec:convergence} relies on this fact.

We get a hierarchy of bounds, namely
\[
  \Delta_2(G) \geq \Delta_3(G) \geq \cdots \geq \alpha(G).
\]
Indeed, we can easily restrict a solution of~$\Delta_{k+1}(G)$ to a solution
of~$\Delta_k(G)$ with the same objective value.  Moreover, if~$I$ is any
independent set of~$G$, then
\[
  \nu = \sum_{R \in \stb_k,\ R \subseteq I} \delta_R,
\]
where~$\delta_R$ is the Dirac measure at~$R$, is a feasible solution
of~$\Delta_k(G)$ with objective value~$|I|$, and
so~$\Delta_k(G) \geq \alpha(G)$.

%=====================================================================

\subsection{Continuity of~\texorpdfstring{$B_k$}{Bk}}%
\label{sec:bk-cont}

It is not true in general that~$B_k F$ is continuous, but this is the case
when~$G$ is a \textsl{packing} graph.  This fact is used in the literature, but
no proof is to be found, so here is one.

\begin{theorem}%
  \label{thm:bk-cont}
  If~$G = (V, E)$ is a topological packing graph, if~$k \geq 2$ is an integer,
  and if~$F \in \csym(\stb_1^2 \times \stb_{k-2})$, then~$B_k F$ is continuous.
\end{theorem}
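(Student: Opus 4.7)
The plan is to combine Lemma~\ref{lem:ind-topo} with a lifting argument to~$V^r$. By Lemma~\ref{lem:ind-topo}, since $\stb_{=r}$ is clopen in $\stb_k$ for every $r \leq k$, it suffices to show that the restriction of $B_k F$ to each $\stb_{=r}$ is continuous. After fixing such an $r$, I will use that the topology on $\sub{V}{r}$ is the quotient topology of $\flatten{\cdot}\colon V^r \to \sub{V}{r}$: it is then enough to verify continuity of the pullback of $B_k F|_{\stb_{=r}}$ to the preimage $W_r = \invflatten{\stb_{=r}}{r}$.

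A tuple $(v_1, \ldots, v_r) \in W_r$ has distinct entries forming an independent set $I = \{v_1, \ldots, v_r\}$. Every triple $(Q, S, T)$ contributing to~\eqref{eq:bk-def} satisfies $Q \cup S \cup T = I$, which forces $Q, S, T \subseteq I$. Such triples are therefore indexed by subsets $(J, A, B)$ of $\{1, \ldots, r\}$ with $|A|, |B| \leq 1$, $|J| \leq k - 2$, and $J \cup A \cup B = \{1, \ldots, r\}$, via $Q = \{v_j : j \in J\}$, $S = \{v_j : j \in A\}$, $T = \{v_j : j \in B\}$. Consequently the pullback equals the finite sum over this combinatorial index set of terms
\[
  (v_1, \ldots, v_r) \mapsto F\bigl(\{v_j : j \in A\},\, \{v_j : j \in B\},\, \{v_j : j \in J\}\bigr).
\]

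Each such term is continuous: the coordinate projections $V^r \to V^{|J|}$ (and analogously for $A$, $B$) are continuous; the quotient maps $\flatten{\cdot}\colon V^{|J|} \to \sub{V}{|J|}$ are continuous by definition of the quotient topology; the resulting subsets lie in $\stb_{k-2}$ and $\stb_1$ because subsets of an independent set are independent; and $F$ is continuous by hypothesis. A finite sum of continuous functions is continuous, which concludes the argument.

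The main subtlety I anticipate is ensuring the subsets $\{v_j : j \in J\}$ vary continuously as elements of $\stb_{k-2}$: on an unrestricted preimage in $V^k$, coincidences among lifted coordinates could cause cardinalities of these subsets to drop, which would spoil continuity into the subset space. The stratification by $\stb_{=r}$ provided by Lemma~\ref{lem:ind-topo}, which itself leans on the packing hypothesis, is exactly what rules such coincidences out on $W_r$ and makes the lifting argument go through.
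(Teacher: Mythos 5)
Your route differs from the paper's: the paper fixes a convergent net in $\stb_{=r}$, uses a separate lemma about nets in $\sub{V}{k}$ to label the points of nearby independent sets by disjoint open neighborhoods, and then argues termwise convergence of the (eventually constant-length) sum; you instead pull everything back to $V^r$ and write the pullback as a finite sum of continuous functions. Your combinatorial decomposition over triples $(J,A,B)$ is correct on $W_r$, and each term is indeed continuous there. But the load-bearing step --- ``it is then enough to verify continuity of the pullback of $B_kF|_{\stb_{=r}}$ to $W_r = \invflatten{\stb_{=r}}{r}$'' --- does not follow from the definition of the quotient topology as you assert. The universal property gives you: $g$ is continuous on $\sub{V}{r}\setminus\{\emptyset\}$ if and only if $g\circ\flatten{\cdot}$ is continuous on all of $V^r$. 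You are instead working with the restriction of $\flatten{\cdot}$ to the saturated subset $W_r$, mapping onto the subspace $\stb_{=r}$, and restrictions of quotient maps to saturated subsets are \emph{not} in general quotient maps. Worse, the standard restriction theorems (for open or closed saturated subsets) do not apply directly, because $\stb_{=r}$ is in general neither open nor closed in $\sub{V}{r}\setminus\{\emptyset\}$: it is the intersection of the closed set $\stb_r$ with the open set of $r$-element subsets. Without this step your finite-sum computation only shows continuity of the pullback, not of $B_kF$ itself.

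The gap is repairable, but it needs an argument you have not supplied. One way: restrict in two stages. The set $D_r=\{v\in V^r: v_i\neq v_j \text{ for } i\neq j\}$ is open (since $V$ is Hausdorff) and saturated, so $\flatten{\cdot}|_{D_r}\colon D_r\to\sub{V}{=r}$ is again a quotient map; then $W_r$ is a \emph{closed} saturated subset of $D_r$ (because $E$ is open in $V^2$ for a packing graph, so the set of tuples containing an edge is open), and the restriction of a quotient map to a closed saturated subset is a quotient map onto $\stb_{=r}$. Only after this does your finite-sum argument on $W_r$ yield the theorem. (A minor further point to record: your terms map into $\sub{V}{|J|}$, whereas $F$ wants an argument in $\stb_{k-2}\subseteq\sub{V}{k-2}$; continuity into the latter follows, e.g., by factoring $v\mapsto\flatten{(v_1,\ldots,v_{|J|},v_{|J|},\ldots,v_{|J|})}$ through $V^{k-2}$.) The paper's net argument sidesteps all of this by never invoking the quotient-map property on a subspace, at the cost of the auxiliary Lemma~\ref{lem:set-net}; your approach, once the quotient-map step is justified, is arguably more structural, but as written that justification is the missing piece. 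Also note that the distinctness of the lifted coordinates on $W_r$ is automatic from $|\flatten{v}|=r$ for $v\in V^r$ and does not ``lean on the packing hypothesis''; the packing hypothesis enters through Lemma~\ref{lem:ind-topo} and through the openness of $E$ used above.
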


We need the following simple lemma.

\begin{lemma}%
  \label{lem:set-net}
  If~$V$ is a topological Hausdorff space, if~$(S_\alpha)$ is a net
  in~$\sub{V}{k}$ that converges to~$S$, and if~$U$ is an open set such
  that~$S \cap U \neq \emptyset$, then there is~$\alpha_0$ such that~$S_\alpha
  \cap U \neq \emptyset$ for all~$\alpha \geq \alpha_0$.
\end{lemma}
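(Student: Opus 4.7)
The plan is to produce an explicit basic open neighborhood of $S$ in $\sub{V}{k}$ every element of which meets $U$; the convergence $S_\alpha \to S$ then directly forces $S_\alpha$ to meet $U$ eventually.

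For the construction, note first that since $S \cap U \neq \emptyset$, the set $S$ is nonempty, so I can write $S = \{x_1, \ldots, x_r\}$ with $x_1 \in U$ and $r \leq k$. Using that $V$ is Hausdorff, I would separate the points $x_1, \ldots, x_r$ by pairwise disjoint open sets $W_1, \ldots, W_r$ with $x_i \in W_i$. Replacing $W_1$ by $W_1 \cap U$, which is still open and still disjoint from the remaining $W_i$, I may assume $W_1 \subseteq U$.

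By Handel's description of the basis (quoted just before this lemma), the set $(W_1, \ldots, W_r)_k$ is open in $\sub{V}{k}$ and contains $S$. Any $A$ in it meets $W_1$, and hence meets $U$ because $W_1 \subseteq U$. Applying the definition of net convergence in $\sub{V}{k}$, there exists $\alpha_0$ with $S_\alpha \in (W_1, \ldots, W_r)_k$ for all $\alpha \geq \alpha_0$, and this is exactly the desired conclusion.

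The argument is essentially routine; the only place where the hypotheses are used is in the Hausdorff separation of the points of $S$, combined with the minor trick of shrinking the neighborhood of $x_1$ into $U$. I do not anticipate any substantive obstacle.
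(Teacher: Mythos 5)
Your proof is correct: $(W_1,\ldots,W_r)_k$ is indeed a basic open neighborhood of $S$ (note $r\geq 1$ since $S\cap U\neq\emptyset$, and $r\leq k$), every element of it meets $W_1\subseteq U$, and net convergence does the rest. The route is slightly different from the paper's, though. The paper does not separate the points of $S$ at all: it invokes a different fact from Handel (his Lemma~2.6, that the quotient map $\flatten{\cdot}$ is open), takes the single open set $W=\flatten{U\times V^{k-1}}$, which consists precisely of the nonempty subsets of cardinality at most $k$ meeting $U$, and observes that $S\in W$ while any $S_\alpha$ disjoint from $U$ lies outside $W$. That version is marginally more economical and, as a bonus, never uses the Hausdorff hypothesis, whereas your argument genuinely needs Hausdorff to get the pairwise disjoint $W_i$ required for the basic open sets $(W_1,\ldots,W_r)_k$. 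On the other hand, your proof relies only on the basis description of the topology that is already quoted in the paper immediately before the lemma, so it is self-contained relative to the stated background; since the lemma assumes Hausdorff anyway, nothing is lost in this context.
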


\begin{proof}
If the statement is not true, then there is an open set~$U$ with~$S \cap U \neq
\emptyset$ such that for all~$\alpha_0$ there is~$\alpha \geq \alpha_0$
with~$S_\alpha \cap U = \emptyset$.

Since~$\sub{V}{k}$ is closed, we know that~$|S| \leq k$.
Set~$W = \flatten{U \times V^{k-1}}$, which~\cite[Lemma~2.6]{Handel2000} is an
open set in~$\sub{V}{k}$.  It contains all subsets of~$V$ of cardinality at
most~$k$ that contain at least one element of~$U$, and so~$S \in W$.
If~$S_\alpha$ is such that~$S_\alpha \cap U = \emptyset$,
then~$S_\alpha \notin W$.  But then for all~$\alpha_0$ there
is~$\alpha \geq \alpha_0$ with~$S_\alpha \notin W$, a contradiction.
\end{proof}

\begin{proof}[Proof of Theorem~\ref{thm:bk-cont}]
By Lemma~\ref{lem:ind-topo} it suffices to show that~$B_k F$ is continuous
on~$\stb_{=r}$ for every~$0 \leq r \leq k$.  To do so, fix~$r$ and let~$(I_\alpha)$
be a net in~$\stb_{=r}$ that converges to~$I \in \stb_{=r}$; we show
that~$(B_k F)(I_\alpha)$ converges to~$(B_k F)(I)$.

To see this, say~$I = \{x_1, \ldots,x_r\}$ and take disjoint open sets~$U_1$,
\dots,~$U_r$ such that~$x_i \in U_i$.  Applying Lemma~\ref{lem:set-net} to
each~$U_i$ and taking an upper bound we see that there is~$\alpha_0$ such that
for all~$\alpha \geq \alpha_0$ we have~$I_\alpha \cap U_i \neq \emptyset$ for
all~$i$.  Hence, since the~$U_i$ are disjoint, we can write
$I_\alpha = \{x_{\alpha, 1}, \ldots, x_{\alpha, r}\}$
with~$x_{\alpha,i} \in U_i$ for all~$\alpha \geq \alpha_0$.

It follows that, for~$\alpha \geq \alpha_0$, the double sum in~\eqref{eq:bk-def}
for~$I = I_\alpha$ can be written in terms of the indices~$1$, \dots,~$r$
instead of the elements of~$I_\alpha$ themselves.  More precisely, the number of
terms in the double sum is always the same, and there is a natural bijection
between the summands of~$(B_k F)(I_\alpha)$ and of~$(B_k F)(I_\beta)$ such that
between two corresponding summands the points~$x_{\alpha, i}$ and~$x_{\beta, i}$
appear in the same places.  Since~$(x_{\alpha, i})$ converges to~$x_i$ for
all~$i$, and since~$F$ is continuous, the theorem follows.
\end{proof}

%%%%%%%%%%%%%%%%%%%%%%%%%%%%%%%%%%%%%%%%%%%%%%%%%%%%%%%%%%%%%%%%%%%%%%

\section{The copositive hierarchy}

Let~$V$ be a finite set.  A symmetric matrix~$A \in \R^{V \times V}$ is
\defi{copositive} if~$x^\tp A x \geq 0$ for all~$x \in \R^V$ with~$x \geq 0$.
The set of all copositive matrices on~$V$, denoted by $\cop(V)$, is a closed
convex cone.

Let~$V$ be a topological space. A continuous kernel~$F \in \csym(V)$ is
\defi{copositive} if~$\bigl(F(x, y)\bigr)_{x,y \in U}$ is a copositive matrix
for every finite set~$U \subseteq V$.  The set of all copositive kernels on~$V$,
denoted by~$\cop(V)$, is a convex cone as well.

For a finite graph~$G$, if the cone of positive-semidefinite matrices is
replaced by the cone of copositive matrices in the dual (minimization) version
of the Lovász theta number semidefinite program, then the optimal value of the
resulting problem is exactly~$\alpha(G)$.  This result goes back to Motzkin and
Straus~\cite{MotzkinS1965}, as observed by de Klerk and
Pasechnik~\cite{KlerkP2002a}, and was extended to topological packing graphs by
Dobre, Dür, Frerick, and Vallentin~\cite{DobreDFV2016}.

Since solving copositive programs is computationally intractable, hierarchies of
approximations of the copositive cone have been proposed.  One such hierarchy
was introduced by de Klerk and Pasechnik~\cite{KlerkP2002a}, and later extended
by Kuryatnikova and Vera~\cite{Kuryatnikova2019} to topological packing graphs.

Let~$\symg_n$ be the symmetric group on~$n$ elements and let~$\Tcal_r\colon
C(V^2) \to C(V^{r+2})$ be the operator
\[
  (\Tcal_r F)(x_1, \ldots, x_{r+2}) = \frac{1}{(r+2)!} \sum_{\pi\in\symg_{r+2}}
  (F \otimes \one^{\otimes r})(x_{\pi(1)}, \ldots, x_{\pi(r+2)}),
\]
where~$\one$ is the constant one function.  Note
that~$(F \otimes \one^{\otimes r})(x_1, \ldots, x_{r+2}) = F(x_1, x_2)$.
Let~$r \geq 0$ be an integer and define
\[
  \kpcone_r(V) = \{\, F \in \csym(V) : \Tcal_r F \geq 0\,\}.
\]
This is a closed convex cone.  Kuryatnikova and
Vera~\cite[Theorem~2.9]{Kuryatnikova2019} show that
\[
  \kpcone_1(V) \subseteq \kpcone_2(V) \subseteq \cdots \subseteq \cop(V)
\]
and that any copositive kernel in the algebraic interior of~$\cop(V)$ belongs
to~$\kpcone_r(V)$ for some~$r$.

This gives a converging hierarchy of upper bounds for the independence number.
Namely, let~$G = (V, E)$ be a compact topological packing graph and for
integer~$r \geq 0$ consider the optimization problem
\begin{equation}%
  \label{opt:xi-p-dual}
  \begin{optprob}
    \xip_r(G)^* = \inf&\lambda\\
    &Z(x, x) \leq \lambda - 1&\text{for all~$x \in V$,}\\
    &Z(x, y) \leq -1&\text{for all distinct~$x$, $y \in V$ with~$(x, y) \notin
      E$,}\\
    &Z \in \kpcone_r(V).
  \end{optprob}
\end{equation}
Here, the asterisk is used to emphasize that this is the dual of the problem in
which we are interested.

We have~$\xip_r(G)^* \geq \alpha(G)$ for all~$r \geq 0$, and so we get a
hierarchy of upper bounds for the independence number, that is,
\[
  \xip_1(G)^* \geq \xip_2(G)^* \geq \cdots \geq \alpha(G).
\]
Kuryatnikova and Vera~\cite[Theorem~2.17]{Kuryatnikova2019} showed that this
hierarchy converges\footnote{The theorem presented in Kuryatnikova's
  thesis~\cite{Kuryatnikova2019} relies on an extra assumption which is however
  unnecessary; see Appendix~\ref{sec:cop-conv-general}.}.

\begin{theorem}%
  \label{thm:kpcone-conv}
  If~$G$ is a compact topological packing graph, then~$\xip_r(G)^* \to
  \alpha(G)$ as~$r \to \infty$.
\end{theorem}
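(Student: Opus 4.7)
My strategy is to combine the exact copositive reformulation of~$\alpha(G)$ (going back to Motzkin and Straus, observed by de Klerk and Pasechnik~\cite{KlerkP2002a}, and extended to topological packing graphs by Dobre, Dür, Frerick, and Vallentin~\cite{DobreDFV2016}) with the fact recalled above that every copositive kernel in the algebraic interior of~$\cop(V)$ sits in some~$\kpcone_r(V)$. Since~$\xip_r(G)^* \geq \alpha(G)$ has already been recorded and the sequence~$\xip_r(G)^*$ is monotone decreasing, it is enough to produce, for each~$\epsilon > 0$, an~$r$ and a feasible pair for~$\xip_r(G)^*$ with objective value at most~$\alpha(G) + \epsilon$.

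First I would pick a near-optimal solution~$(Z_0, \lambda_0)$ of the exact copositive program with~$\lambda_0 \leq \alpha(G) + \epsilon/2$, then perturb using the constant kernel~$J(x, y) \equiv 1$: for small~$\eta > 0$, set
\[
  Z_\eta = (1 + \eta) Z_0 + \eta J \quad\text{and}\quad \lambda_\eta = (1 + \eta) \lambda_0.
\]
Feasibility for the copositive program is immediate from~$Z_\eta(x, x) \leq (1 + \eta)(\lambda_0 - 1) + \eta = \lambda_\eta - 1$ and~$Z_\eta(x, y) \leq -(1 + \eta) + \eta = -1$ for non-adjacent~$x \neq y$. The decisive point is that the~$\eta J$ summand forces~$Z_\eta$ into the algebraic interior of~$\cop(V)$: for any~$F \in \csym(V)$ with sup-norm bound~$M$ (finite by compactness of~$V^2$), any finite~$U \subseteq V$, and any nonnegative~$a \in \R^U$, one has
\[
  \sum_{x, y \in U} a_x a_y (Z_\eta + tF)(x, y) \geq (\eta - tM)\Bigl(\sum_{x \in U} a_x\Bigr)^2 \geq 0
\]
for all~$t \in [0, \eta/M]$, using the copositivity of~$(1 + \eta)Z_0$ and the bound~$\bigl|\sum a_x a_y F(x, y)\bigr| \leq M(\sum a_x)^2$. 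Kuryatnikova and Vera's theorem then supplies an~$r$ with~$Z_\eta \in \kpcone_r(V)$, so~$(Z_\eta, \lambda_\eta)$ is feasible for~$\xip_r(G)^*$ and gives $\xip_r(G)^* \leq \lambda_\eta = (1 + \eta)\lambda_0$; choosing~$\eta$ small enough drives this below~$\alpha(G) + \epsilon$.

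The main obstacle is the perturbation step. Merely adding~$\eta J$ to~$Z_0$ violates the off-diagonal constraints~$Z(x, y) \leq -1$, while merely rescaling~$Z_0$ by~$1 + \eta$ leaves it on the boundary of~$\cop(V)$ and hence outside the scope of Kuryatnikova and Vera's theorem. The combined scaling-plus-translation above is calibrated exactly so that the tight non-edge constraints remain tight under~$Z_\eta$, while simultaneously a uniform quadratic margin~$\eta(\sum a_x)^2$ appears that is large enough to absorb an arbitrary continuous symmetric perturbation and thus to witness membership in the algebraic interior.
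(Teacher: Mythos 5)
Your proof is correct, and it follows the same overall skeleton as the paper's: start from the exact copositive reformulation $\xip_\infty(G)^* = \alpha(G)$ of Dobre, D\"ur, Frerick, and Vallentin, perturb a near-optimal feasible solution into the algebraic interior of $\cop(V)$, and invoke Kuryatnikova and Vera's theorem to land in some $\kpcone_r(V)$. Where you genuinely diverge is in how the perturbation is carried out. The paper first constructs a kernel $Z_0$ that lies in the algebraic interior of $\cop(V)$ \emph{and} satisfies $Z_0(x,y) \leq -1$ on the non-edges; this requires importing Lemma~7 of de Laat and Vallentin (the existence of a positive-semidefinite kernel $F$ with $F \leq -1$ off the edge set) and then mixing $2F$ with the constant kernel $J$. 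Feasible solutions are then pushed inward via convex combinations $\epsilon Z_0 + (1-\epsilon)Z$. Your scale-and-shift $Z_\eta = (1+\eta)Z_0 + \eta J$ exploits the homogeneity of the constraints to absorb the $+\eta J$ violation of the non-edge bounds into the rescaled $\lambda_\eta = (1+\eta)\lambda_0$, and your direct estimate $\sum_{x,y} a_x a_y (Z_\eta + tF)(x,y) \geq (\eta - tM)\bigl(\sum_x a_x\bigr)^2$ verifies membership in the algebraic interior by hand. This makes your argument more self-contained: it needs only $\xip_\infty(G)^* = \alpha(G)$ and \cite[Theorem~2.9]{Kuryatnikova2019}, and dispenses with the de Laat--Vallentin separating kernel entirely. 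The paper's route, on the other hand, produces a single fixed interior point $Z_0$ that works uniformly for every feasible $Z$, which is the more standard Slater-type argument. Both are valid; yours is arguably the more elementary of the two.
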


The dual of~\eqref{opt:xi-p-dual}, in the sense of
Barvinok~\cite[Chapter~IV]{Barvinok2002}, is
\begin{equation}%
  \label{opt:xi-p-primal}
  \begin{optprob}
    \xip_r(G) = \sup&\alpha(\Delta)\\
    &\alpha(\{\emptyset\}) = 1,\\
    &\alpha_E = 0,\\
    &\alpha = \Tcal_r^* \beta,\\
    &\alpha \in \msym(V)_{\geq 0},\ \beta \in M(V^{r+2})_{\geq 0}.
  \end{optprob}
\end{equation}
Here,~$\Delta = \{\, (x, x) : x \in V\,\}$ is the diagonal, which is closed and
therefore measurable.  By~$\alpha_E$ we denote the restriction of~$\alpha$
to~$E$; since~$E$ is open, this restriction is again a signed Radon measure.
Note also that $\kpcone_r(V)^* = \Tcal_r^* M(V^{r+2})_{\geq 0}$,
so~$\alpha \in \kpcone_r(V)^*$.

\begin{theorem}%
  \label{thm:k-point-conv}
  If~$G$ is a compact topological packing graph, then for every~$r \geq 1$ we
  have~$\Delta_{r+2}(G) \leq \xip_r(G)$ and~$\Delta_k(G) = \alpha(G)$ for
  all~$k \geq \alpha(G) + 2$.
\end{theorem}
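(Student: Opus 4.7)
The plan is to prove the two assertions of Theorem~\ref{thm:k-point-conv} separately: first the inequality $\Delta_{r+2}(G) \leq \xip_r(G)$ for every $r \geq 1$, and then the exactness $\Delta_k(G) = \alpha(G)$ for $k \geq \alpha(G) + 2$, which I will deduce from the inequality, Theorem~\ref{thm:kpcone-conv}, and a stabilization argument.

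For the inequality I start from a feasible $\nu \in M(\stb_{r+2})_{\geq 0}$ for $\Delta_{r+2}(G)$ and produce a feasible pair $(\alpha, \beta)$ for $\xip_r(G)$ with $\alpha(\Delta) \geq \nu(\stb_{=1})$. The natural construction is to lift $\nu$ from independent sets to ordered $(r+2)$-tuples in $V$: schematically, one distributes a piece of $\beta \in M(V^{r+2})_{\geq 0}$ over every tuple $(v_1, \ldots, v_{r+2}) \in I^{r+2}$ weighted by $\nu$, and then sets $\alpha = \Tcal_r^* \beta$. Such $(\alpha, \beta)$ satisfies $\alpha \in \msym(V)_{\geq 0}$ by construction, $\alpha_E = 0$ because $\beta$ is supported on tuples drawn from independent sets and so no coordinate pair lies in $E$, the normalization at $\{\emptyset\}$ is inherited from $\nu(\{\emptyset\}) = 1$, and $\alpha(\Delta) \geq \nu(\stb_{=1})$ comes from isolating the singleton contributions to the diagonal. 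Well-definedness of the pushforward uses Lemma~\ref{lem:ind-topo}: since $|I|$ is locally constant on $\stb_{r+2}$, the combinatorial sums over $I^{r+2}$ yield continuous functionals.

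Once the inequality is in hand, combining it with weak duality $\xip_r(G) \leq \xip_r(G)^*$ and Theorem~\ref{thm:kpcone-conv} gives
\[
  \alpha(G) \leq \Delta_{r+2}(G) \leq \xip_r(G) \leq \xip_r(G)^* \longrightarrow \alpha(G) \quad \text{as } r \to \infty,
\]
so $\Delta_k(G) \to \alpha(G)$ as $k \to \infty$. To upgrade this convergence to exactness at every $k \geq \alpha(G) + 2$, I appeal to stabilization. By Lemma~\ref{lem:ind-topo}, $\stb_k$ decomposes into the clopen pieces $\stb_{=r}$, and $\stb_{=r} = \emptyset$ for $r > \alpha(G)$, so the space $\stb_k$ (and likewise $\stb_{k-2}$ for $k \geq \alpha(G) + 2$) does not depend on $k$ in this range. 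Moreover, every decomposition $I = Q \cup S \cup T$ appearing in the definition of $B_k$ automatically satisfies $|Q| \leq |I| \leq \alpha(G) \leq k - 2$, so the operator $B_k$ and the whole problem $\Delta_k(G)$ do not depend on $k$ once $k \geq \alpha(G) + 2$. Being constant on this range and convergent to $\alpha(G)$, the common value must equal $\alpha(G)$.

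The main obstacle lies in the first step: the pushforward must be arranged so that the positive-semidefiniteness condition $B_{r+2}^* \nu \in M(\stb_1^2 \times \stb_r)_{\succeq 0}$ translates into plain nonnegativity of $\beta$ on $V^{r+2}$, with $\Tcal_r^* \beta$ reproducing the pair-level data recorded by $\nu$ and thereby placing $\alpha$ in $\Tcal_r^* M(V^{r+2})_{\geq 0} = \kpcone_r(V)^*$. Pinning down the correct matching identity between the $B_k$ operator on set-valued measures and the symmetrizer $\Tcal_r$ on tuple-valued measures is where the combinatorial heart of the proof is concentrated.
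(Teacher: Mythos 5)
Your second step (stabilization of $\Delta_k(G)$ for $k \geq \alpha(G)+2$, combined with weak duality and Theorem~\ref{thm:kpcone-conv}) is correct and is exactly the paper's argument. The gap is in the first step, and you have in fact flagged it yourself: the ``matching identity'' you defer to is not a technicality but the entire content of the inequality $\Delta_{r+2}(G) \leq \xip_r(G)$. Two things go wrong in the sketch as written. First, the pushforward must be taken over \emph{surjective} tuples, i.e.\ $\beta$ is built from the operator $(Q_{r+2,t}F)(I) = \sum_{v \in V^{r+2},\, \flatten{v}=I} F(v_1,\dots,v_t)$, not over all of $I^{r+2}$; this is what makes the zeroth and first ``moments'' $\Phi_t = \langle N_t,\nu\rangle$ (with $N_t(S)$ the number of $t$-tuples covering $S$) satisfy $\Phi_0 = \nu(\{\emptyset\}) = 1$ and $\Phi_1 = \nu(\stb_{=1})$, which is where the objective value of $\nu$ enters. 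With all tuples in $I^{r+2}$ the analogous moments are $\int |I|^t\,d\nu$, and neither the normalization nor the comparison with $\nu(\stb_{=1})$ comes out. Second, one must rescale: the paper sets $\beta = \Phi_{r+1}^{-1}Q_{r+2,r+2}^*\nu$ so that $\alpha(\Delta)=1$, and the objective then equals $\alpha(V^2) = \Phi_{r+2}/\Phi_{r+1}$, so the whole inequality reduces to $\Phi_{r+2}/\Phi_{r+1} \geq \Phi_1/\Phi_0$.

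This last inequality is the combinatorial heart, and your proposal misassigns the role of the constraint $B_{r+2}^*\nu \in M(\stb_1^2\times\stb_r)_{\succeq 0}$: it is \emph{not} needed to make $\beta$ nonnegative (that follows from $\nu \geq 0$ alone, since $Q_{r+2,r+2}$ preserves nonnegativity), and it does not ``translate into plain nonnegativity of $\beta$.'' It is used to prove log-convexity of the sequence $(\Phi_t)$: pairing $B_{r+2}^*\nu$ with the test functions $\chi_{\stb_{=i}} \otimes \chi_{\stb_{=j}} \otimes N_t$ for $i,j \in \{0,1\}$ produces, after a counting bijection, the $2\times 2$ matrix $\begin{pmatrix}\Phi_t & \Phi_{t+1}\\ \Phi_{t+1} & \Phi_{t+2}\end{pmatrix}$, which is positive semidefinite because it is the moment matrix of a positive-semidefinite measure on $\stb_1^2$. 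Chaining $\Phi_{t+2}\Phi_{t+1}^{-1} \geq \Phi_{t+1}\Phi_t^{-1}$ down to $t=0$ gives the objective comparison (and, as a by-product, the positivity $\Phi_t > 0$ needed for the rescaling to make sense). Without this argument the construction only yields a feasible pair $(\alpha,\beta)$, not one whose objective value dominates $\nu(\stb_{=1})$, so the claimed inequality does not follow from what you have written.
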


%%%%%%%%%%%%%%%%%%%%%%%%%%%%%%%%%%%%%%%%%%%%%%%%%%%%%%%%%%%%%%%%%%%%%%

\section{Convergence of the \texorpdfstring{$k$}{k}-point bound}%
\label{sec:convergence}

Throughout this section,~$G = (V, E)$ is a compact topological packing graph.
Our goal is to prove Theorem~\ref{thm:k-point-conv}, and we do so by taking a
feasible solution of~$\Delta_{r+2}(G)$ and making a feasible solution
of~\eqref{opt:xi-p-primal} with the same objective value.

Let~$r \geq 0$ be an integer and let~$S \subseteq V$.  Denote by~$N_r(S)$ the
number of tuples~$v \in V^r$ such that~$\flatten{v} = S$.  We use the
convention~$V^0 = \{\emptyset\}$ and~$\flatten{\emptyset} = \emptyset$, so the
definition includes the case~$r = 0$.  We do not specify a domain for~$N_r$; we
will use it as a function on~$\stb_k$ for any~$k$ we need.  Note that~$N_r(S)$
depends only on the cardinality of~$S$, and so by Lemma~\ref{lem:ind-topo} the
function~$N_r\colon \stb_k \to \R$ is continuous.

Fix integers~$k \geq 0$, $s \geq 1$, and~$0 \leq t \leq s$.  Let~$Q_{s,t}\colon
C(V^t) \to C(\stb_k)$ be the map such that
\[
  (Q_{s,t}F)(I) = \sum_{\substack{v \in V^s\\\flatten{v} = I}} F(v_1, \ldots,
  v_t).
\]
A proof similar to that of Theorem~\ref{thm:bk-cont} shows that~$Q_{s,t}F$ is
indeed continuous for every continuous~$F$.  Of course, these maps also depend
on~$k$, but since~$k$ does not appear in the expression above, we omit it.

\begin{lemma}%
  \label{lem:final}
  Let~$k$, $s$, and~$t$ be integers such that~$k \geq 0$, $s \geq 1$,
  and~$0 \leq t \leq s$ and let~$G = (V, E)$ be a compact topological packing
  graph.  We have:
  \begin{enumerate}
  \item[(i)] The map~$Q_{s,t}$ is continuous and~$(Q_{s,t}^*\nu)(V^t) = \langle
    N_s, \nu\rangle$ for all~$\nu \in M(\stb_k)$. % every signed Radon measure~$\nu$ on~$\stb_k$.
    
  \item[(ii)] If~$t + 2 \leq s$, then~$Q_{s,t+2} \Tcal_t = Q_{s,2}$ and
    hence~$\Tcal_t^* Q_{s,t+2}^* = Q_{s,2}^*$.
  \end{enumerate}
\end{lemma}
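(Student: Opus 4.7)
The plan is to handle both parts by essentially direct computation.

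For part~(i), continuity of $Q_{s,t}\colon C(V^t)\to C(\stb_k)$ reduces to boundedness: every $I\in\stb_k$ satisfies $|I|\le k$, so $N_s(I)\le k^s$, which gives $\|Q_{s,t}F\|_\infty\le k^s\|F\|_\infty$. For the adjoint identity, I would test $Q_{s,t}^*\nu$ against the constant function $\one_{V^t}$. The definition of $Q_{s,t}$ gives $(Q_{s,t}\one_{V^t})(I)=N_s(I)$, and so by duality
\[
(Q_{s,t}^*\nu)(V^t)=\langle\one_{V^t},Q_{s,t}^*\nu\rangle=\langle Q_{s,t}\one_{V^t},\nu\rangle=\langle N_s,\nu\rangle.
\]

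For part~(ii), the plan is to establish $Q_{s,t+2}\Tcal_t F=Q_{s,2}F$ pointwise, after which the adjoint identity $\Tcal_t^*Q_{s,t+2}^*=Q_{s,2}^*$ follows immediately. Expanding and using that $(\Tcal_t F)(x_1,\ldots,x_{t+2})$ is the $\symg_{t+2}$-average of $F(x_{\pi(1)},x_{\pi(2)})$ (the trailing $\one^{\otimes t}$ factor equals~$1$), we obtain
\[
(Q_{s,t+2}\Tcal_t F)(I)=\frac{1}{(t+2)!}\sum_{\pi\in\symg_{t+2}}\ \sum_{\substack{v\in V^s\\\flatten{v}=I}}F(v_{\pi(1)},v_{\pi(2)}).
\]
For each fixed $\pi$, the map permuting the first $t+2$ entries of $v\in V^s$ by $\pi^{-1}$ and fixing the remaining entries is a bijection of $\{v\in V^s:\flatten{v}=I\}$ to itself, since $\flatten{\cdot}$ ignores the order of entries. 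This collapses the inner sum to $\sum_{w:\flatten{w}=I}F(w_1,w_2)=(Q_{s,2}F)(I)$, and averaging over the $(t+2)!$ permutations yields the identity.

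Neither step presents a real obstacle. The argument is essentially bookkeeping, and the main thing to notice is that the symmetrization performed by $\Tcal_t$ is absorbed by the $\symg_{t+2}$-invariance of the index set $\{v\in V^s:\flatten{v}=I\}$ under permutations of the first $t+2$ coordinates.
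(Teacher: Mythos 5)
Your proof is correct and follows essentially the same route as the paper: continuity via boundedness, the adjoint identity by testing against the constant function (whose image under $Q_{s,t}$ is $N_s$), and part~(ii) by swapping the two sums and using that the index set $\{\,v \in V^s : \flatten{v} = I\,\}$ is invariant under permuting the first $t+2$ coordinates. Your explicit bijection argument just spells out the step the paper leaves implicit.
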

  
\begin{proof}
The map~$Q_{s,t}$ is linear and bounded and hence continuous, so its
adjoint is well defined.  Moreover, if~$\nu \in M(\stb_k)$ then
\[
  \begin{split}
    (Q_{s,t}^*\nu)(V^t)
    &= \langle Q_{s,t}\chi_{V^t}, \nu\rangle\\
    &=\int_{\stb_k} \sum_{\substack{v \in V^s\\\flatten{v} = I}} 1\, d\nu(I)\\
    &=\langle N_s, \nu\rangle,
  \end{split}
\]
proving~(i).

To see~(ii), let~$F \in C(V^2)$.  For every~$I \in \stb_k$ we have
\[
  \begin{split}
    (Q_{s, t+2} \Tcal_t F)(I)
    &=\sum_{\substack{v \in V^s\\\flatten{v} = I}} \frac{1}{(t+2)!} \sum_{\pi \in
    \symg_{t+2}} F(v_{\pi(1)}, v_{\pi(2)})\\ 
    &=\frac{1}{(t+2)!} \sum_{\pi \in \symg_{t+2}}
      \sum_{\substack{v \in V^s\\\flatten{v} = I}} F(v_{\pi(1)}, v_{\pi(2)})\\ 
    &=\sum_{\substack{v \in V^s\\\flatten{v} = I}} F(v_1, v_2)\\
    &=(Q_{s,2} F)(I),
  \end{split}
\]
and~(ii) follows.
\end{proof}

\begin{proof}[Proof of Theorem~\ref{thm:k-point-conv}]
Given a feasible solution of~$\Delta_{r+2}(G)$ we will construct a feasible
solution of~$\xip_r(G)$ with the same objective value,
whence~$\Delta_{r+2}(G) \leq \xip_r(G)$.  Since we know by weak duality
that~$\xip_r(G) \leq \xip_r(G)^*$, together with Theorem~\ref{thm:kpcone-conv}
we see that~$\Delta_k(G) \to \alpha(G)$ as~$k \to \infty$.
Since~$\stb_k = \stb_{\alpha(G)}$ for all~$k \geq \alpha(G)$, we
have~$B_k = B_{\alpha(G) + 2}$ for all~$k \geq \alpha(G) + 2$, whence
$\Delta_k(G) = \Delta_{\alpha(G) + 2}(G)$ for all~$k \geq \alpha(G) + 2$, and
the theorem follows.

Fix~$r$ and let~$\nu$ be a feasible solution of~$\Delta_{r+2}(G)$ with positive
objective value.  We will write~$Q_t = Q_{r+2, t}$ for short, where in the
definition of~$Q_t$ we take~$k = r+2$.

For~$t \geq 0$ write~$\Phi_t = \langle N_t, \nu\rangle$.  We will see later
that~$\Phi_t > 0$ for all~$t$; for now, let us assume this to be true.  Write
\[
  \beta = \Phi_{r+1}^{-1} Q_{r+2}^* \nu \in M(V^{r+2})\qquad\text{and}\qquad
  \alpha = \Tcal_r^* \beta \in \msym(V).
\]
We claim that~$(\alpha, \beta)$ is feasible for~$\xip_r(G)$.

To begin, note that if~$F \in C(V^{r+2})$ is nonnegative, then so
is~$Q_{r+2} F$, hence
$\langle F, Q_{r+2}^* \nu\rangle = \langle Q_{r+2} F, \nu\rangle \geq 0$
and~$\beta$ is nonnegative.  If~$F \in \csym(V)$ is nonnegative, then so
is~$\Tcal_r F$, whence
$\langle F, \alpha\rangle = \langle F, \Tcal_r^*\beta\rangle = \langle \Tcal_r
F, \beta\rangle \geq 0$, and~$\alpha$ is nonnegative.

Next, if~$F\in \csym(V)$ is a function with support contained in~$E$, then using
Lemma~\ref{lem:final} we get
\[
  \begin{split}
    \Phi_{r+1} \langle F, \alpha\rangle
    &= \langle F, \Tcal_r^* Q_{r+2}^* \nu\rangle\\
    &= \langle F, Q_2^* \nu\rangle\\
    &= \int_{\stb_{r+2}} \sum_{\substack{v \in V^{r+2}\\\flatten{v} = I}} F(v_1,
    v_2)\, d\nu(I)\\
    &= 0.
  \end{split}
\]
Since~$E$ is open, it is itself a locally compact Hausdorff space,
and~$\alpha_E$ is a signed Radon measure on~$E$.  It then follows from the Riesz
representation theorem that~$\alpha_E = 0$.

We now want to calculate~$\alpha(\Delta)$.  Every vertex is contained in an open
clique, so by compactness there are open cliques~$C_1$, \dots,~$C_n$ whose union
is~$V$.  Now the set~$U = C_1^2 \cup \cdots \cup C_n^2$ is an open set in~$V^2$
whose union contains~$\Delta$.  Moreover, if~$(x, y) \in U$ and~$x \neq y$,
then~$(x, y) \in E$, that is,~$U \setminus \Delta \subseteq E$.

Urysohn's lemma gives us a continuous function~$F\colon V^2 \to [0, 1]$ that
is~$1$ on~$\Delta$ and~$0$ outside of~$U$.  Since~$\alpha_E = 0$, using
Lemma~\ref{lem:final} we get
\[
  \begin{split}
    \Phi_{r+1} \alpha(\Delta)
    &= \Phi_{r+1} \langle F, \alpha\rangle\\
    &= \langle F, \Tcal_r^* Q_{r+2}^* \nu\rangle\\
    &= \langle F, Q_2^*\nu\rangle\\
    &=\int_{\stb_{r+2}} \sum_{\substack{v \in V^{r+2}\\\flatten{v} = I}} F(v_1,
    v_2)\, d\nu(I)\\
    &=\int_{\stb_{r+2}} \sum_{\substack{v \in V^{r+1}\\\flatten{v} = I}} 1\,
    d\nu(I)\\
    &=\Phi_{r+1},
  \end{split}
\]
whence~$\alpha(\Delta) = 1$. 

To finish the proof we only need to show that~$\Phi_t > 0$ and
that~$\alpha(V^2) \geq \nu(\stb_{=1})$, and this follows from the claim:
if~$t \leq r + 1$, then the matrix
\begin{equation}%
  \label{eq:2x2-mat}
  \begin{pmatrix}
    \Phi_t&\Phi_{t+1}\\
    \Phi_{t+1}&\Phi_{t+2}
  \end{pmatrix}
\end{equation}
is positive semidefinite.

Indeed, assume the claim.  Since~$\Phi_0 = 1$ and~$\Phi_1 = \nu(\stb_{=1}) > 0$,
we immediately get~$\Phi_2 > 0$.  Repeating this argument we get~$\Phi_t > 0$
for all~$t$.

As for the objective value, use Lemma~\ref{lem:final} to get
\[
  \alpha(V^2) = \langle \chi_{V^2}, \Phi_{r+1}^{-1} \Tcal_r^* Q_{r+2}^*
  \nu\rangle
  = \langle Q_2 \chi_{V^2}, \nu\rangle \Phi_{r+1}^{-1}
  = \Phi_{r+2} \Phi_{r+1}^{-1}.
\]
Since~\eqref{eq:2x2-mat} is positive semidefinite, we
have~$\Phi_{t+2} \Phi_{t+1}^{-1} \geq \Phi_{t+1} \Phi_t^{-1}$.  Repeated
application of this inequality
yields~$\Phi_{r+2} \Phi_{r+1}^{-1} \geq \Phi_1 \Phi_0^{-1} = \nu(\stb_{=1})$,
and so~$\alpha(V^2) \geq \nu(\stb_{=1})$, as we wanted.

We prove that~\eqref{eq:2x2-mat} is positive semidefinite.  A
measure~$\mu \in \msym(\stb_1)$ is \defi{positive semidefinite}
if~$\langle F, \mu\rangle \geq 0$ for every positive-semidefinite
kernel~$F \in \csym(\stb_1)$.  We claim that there is such a positive
semidefinite measure~$\mu$ such that
$\mu(\stb_{=i} \times \stb_{=j}) = \Phi_{t+i+j}$ for~$i$, $j = 0$, $1$.

To see this, use the Riesz representation theorem to let~$\mu$ be the measure
such that $\langle F, \mu\rangle = \langle F \otimes N_t, B_{r+2}^* \nu\rangle$
for every~$F \in C(\stb_1^2)$, where we choose~$\stb_r$ as the domain
of~$N_t$.  

To see that~$\mu$ is positive semidefinite, let~$F \in \csym(\stb_1)$ be a
positive-semidefinite kernel.  Since~$N_t \geq 0$, the kernel
$(S, T) \mapsto F(S, T) N_t(Q)$ is positive semidefinite for every~$Q$.
So~$F \otimes N_t \in C(\stb_1^2 \times \stb_r)_{\succeq 0}$
and~$\langle F, \mu\rangle \geq 0$
since~$B_{r+2}^* \nu \in M(\stb_1^2 \times \stb_r)_{\succeq 0}$.

Let us now calculate~$\mu(\stb_{=i} \times \stb_{=j})$ for~$i$, $j = 0$, $1$.
For every~$I \in \stb_{r+2}$ and~$i = j = 0$ we have
\[
  \begin{split}
    (B_{r+2}(\chi_{\{\emptyset\}}^{\otimes 2} \otimes N_t))(I)
    &= \sum_{Q \in \sub{I}{r}} \sum_{\substack{S, T \in \sub{I}{1}\\Q \cup S
    \cup T = I}} \chi_{\{\emptyset\}}(S) \chi_{\{\emptyset\}}(T) N_t(Q)\\
    &= N_t(I).
  \end{split}
\]

For~$i = 0$ and~$j = 1$ we get
\[
  \begin{split}
    (B_{r+2}(\chi_{\{\emptyset\}} \otimes \chi_{\stb_{=1}} \otimes N_t))(I)
    &= \sum_{Q \in \sub{I}{r}} \sum_{\substack{x \in I\\Q \cup \{x\} = I}}
    N_t(Q)\\
    &= \sum_{Q \in \sub{I}{t}} \sum_{\substack{x \in I\\Q \cup \{x\} = I}}
    \sum_{\substack{v \in V^t\\\flatten{v} = Q}} 1.
  \end{split}
\]
There is a bijection between the set of
triples~$(Q, x, v) \in \sub{I}{t} \times I \times V^t$ such
that~$Q \cup \{x\} = I$ and~$\flatten{v} = Q$ and the set of
tuples~$v \in V^{t+1}$ such that~$\flatten{v} = I$,
namely~$(Q, x, v) \leftrightarrow (x, v)$.  Hence
\[
  (B_{r+2}(\chi_{\{\emptyset\}} \otimes \chi_{\stb_{=1}} \otimes N_t))(I) =
  \sum_{\substack{v \in V^{t+1}\\\flatten{v} = I}} 1 = N_{t+1}(I).
\]

Similarly, for~$i = j = 1$ we have
\[
  \begin{split}
    (B_{r+2}(\chi_{\stb_{=1}}^{\otimes 2} \otimes N_t))(I)
    &= \sum_{Q \in \sub{I}{r}} \sum_{\substack{x, y \in I\\Q \cup \{x, y\} =
    I}} N_t(Q)\\
    &= \sum_{Q \in \sub{I}{t}} \sum_{\substack{x, y \in I\\Q \cup \{x, y\} = I}}
    \sum_{\substack{v \in V^t\\\flatten{v} = Q}} 1\\
    &= \sum_{\substack{v \in V^{t+2}\\\flatten{v} = I}} 1\\
    &= N_{t+2}(I).
  \end{split}
\]

Putting it all together, we get that~\eqref{eq:2x2-mat} is equal to
\[
  A = \begin{pmatrix}
    \mu(\stb_{=0}^2)&\mu(\stb_{=0} \times \stb_{=1})\\
    \mu(\stb_{=0} \times \stb_{=1})&\mu(\stb_{=1}^2)
  \end{pmatrix}.
\]
This matrix is positive semidefinite, since for~$x_0$, $x_1 \in \R$ we have
\[
  (x_0, x_1)^\tp A (x_0, x_1) = \int_{\stb_1^2} (x_0 \chi_{\stb_{=0}} + x_1
  \chi_{\stb_{=1}})^{\otimes 2}(S, T)\, d\mu(S, T) \geq 0,
\]
and we are done.
\end{proof}

%%%%%%%%%%%%%%%%%%%%%%%%%%%%%%%%%%%%%%%%%%%%%%%%%%%%%%%%%%%%%%%%%%%%%%

%%%%%%%%%%%%%%%%%%%%%%%%%%%%%%%%%%%%%%%%%%%%%%%%%%%%%%%%%%%%%%%%%%%%%%

\appendix

\section{Convergence of the copositive hierarchy}%
\label{sec:cop-conv-general}

Kuryatnikova and Vera~\cite[Theorem~2.17]{Kuryatnikova2019} proved
Theorem~\ref{thm:kpcone-conv} under an extra assumption, namely the existence of
a kernel~$Z_0$ in the algebraic interior of~$\cop(V)$ such
that~$Z_0(x, y) \leq -1$ for all~$(x, y) \in \overline{E}$, where
\[
  \overline{E} = \{\, (x, y) \in V^2 : \text{$x \neq y$ and~$(x, y) \notin
    E$}\,\}.
\]
They also use a slightly different version of~\eqref{opt:xi-p-dual}, namely they
require~$Z(x, x) = \lambda - 1$ for all~$x \in V$.  We show next that the
hierarchy~\eqref{opt:xi-p-dual} converges by showing that a~$Z_0$ as above
always exists.

\begin{proof}[Proof of Theorem~\ref{thm:kpcone-conv}]
Let~$G = (V, E)$ be a compact topological packing graph.  Note first
that~$\xip_r(G)^* \geq \alpha(G)$ for all~$r \geq 1$.  Indeed, if the problem is
infeasible, then~$\xip_r(G)^* = \infty$ and we are done.  If~$(\lambda, Z)$ is a
feasible solution of~$\xip_r(G)^*$ and if~$I$ is a nonempty independent set,
then since~$Z$ is copositive we have
\[
  0 \leq \sum_{x, y \in I} Z(x, y) \leq |I|(\lambda - 1) - (|I|^2 - |I|),
\]
whence~$|I| \leq \lambda$.
So~$\lim_{r\to\infty} \xip_r(G)^* = L \geq \alpha(G)$.

We claim that there is~$Z_0$ in the algebraic interior of~$\cop(V)$ such
that~$Z_0(x, y) \leq -1$ for~$(x, y) \in \overline{E}$.  Indeed, de Laat and
Vallentin~\cite[Lemma~7]{LaatV2015} showed that there is a positive-semidefinite
kernel~$F$ satisfying~$F(x, y) \leq -1$ for all~$(x, y) \in \overline{E}$.
Then~$2F$ is positive semidefinite and~$2F(x, y) \leq -2$ for
all~$(x, y) \in \overline{E}$, and we can get our kernel~$Z_0$ by making a
convex combination between the constant one kernel~$J$, which is in the
algebraic interior of~$\cop(V)$, and~$2F$.

Let~$\xip_\infty(G)^*$ be the problem obtained from~\eqref{opt:xi-p-dual} by
replacing~$\kpcone_r(V)$ by the copositive cone~$\cop(V)$.  Dobre, Dür, Frerick,
and Vallentin~\cite{DobreDFV2016} showed that~$\xip_\infty(G)^* = \alpha(G)$.
Let~$(\lambda, Z)$ be any feasible solution of~$\xip_\infty(G)^*$.  For
every~$0 < \epsilon \leq 1$, the kernel~$\epsilon Z_0 + (1 - \epsilon) Z$ is in
the algebraic interior of~$\cop(V)$, and so~\cite[Theorem~2.9]{Kuryatnikova2019}
it belongs to~$\kpcone_r(V)$ for some~$r$.  Hence, by taking~$\epsilon \to 0$,
we see that~$L \leq \lambda$.  Since~$(\lambda, Z)$ is an arbitrary feasible
solution of~$\xip_\infty(G)^*$, we are done.
\end{proof}

\end{document}